\newtheorem{theorem}{Theorem}[section]
\theoremstyle{definition}
\newtheorem{defn}{Definition}
\theoremstyle{definition}
\newtheorem{rem}{Remark}
\theoremstyle{Remark}
\theoremstyle{proposition}
\newtheorem{prop}{Proposition}[section]
\begin{document}
\title[m-Pluripotential Theory]
 {m-Pluripotential Theory on Riemannian Spaces and Tropical Geometry}

\author{S\.{I}bel \c{S}ah\.{I}n }

\address{Mimar Sinan Fine Arts University, Mathematics Department}

\email{sibel.sahin@msgsu.edu.tr}

\keywords{m-pluripotential theory, supercurrents, superforms, tropical varieties}

\date{\today}

\subjclass[2010]{31C12 (primary); 31C15, 14T05 (secondary)}

\dedicatory{Dedicated to Professor Ayd{\i}n Aytuna on the occasion of his 70th birthday}


\begin{abstract}
In this study we extend the concepts of $m$-pluripotential theory to the Riemannian superspace formalism. Since in this setting positive supercurrents and tropical varieties are closely related, we try to understand the relative capacity notion with respect to the intersection of tropical hypersurfaces. Moreover, we generalize the classical quasicontinuity result of Cartan to $m$-subharmonic functions of Riemannian spaces and lastly we introduce the indicators of $m$-subharmonic functions and give a geometric characterization of their Newton numbers.
\end{abstract}

\maketitle
\section*{Introduction}

The classical pluripotential theory is perfectly situated on the border of complex analysis and complex geometry. In the wide literature of pluripotential theory there are various studies on complex Monge-Amp\`{e}re equation and its relation with positive currents and plurisubharmonic functions. In \cite{Lagerberg12}, Lagerberg introduced the concepts of superforms/currents and superspaces which are highly related to the tropical geometry of the Riemannian spaces and then in \cite{BB18}, Berndtsson not only extended this formalism to understand the minimal submanifolds of $\mathbb{R}^n$ but also connected these ideas to the complex pluripotential theory and gave variants of the classical results of complex analysis about positive currents e.g El-Mir-Skoda theorem. In this note we will extend these formal ideas further into another direction namely $m$-pluripotential theory which is introduced by Blocki to study the behaviour of complex Hessian equation. Later in \cite{LuThesis} and \cite{ADFE} the ideas of Blocki were connected to $m$-positive currents. Now in this study we will take this connection into Riemannian superspace setting where $m$-positive closed currents actually give information about the intersection of tropical hypersurfaces.

The organization of this paper is as follows: In Section 1 we give the preliminary definitions and results about superspaces/forms/currents and also introduce the new definitions about $m$-positivity in this setting. In Section 2, we mention the basics of tropical geometry in this superspace setting and following Lagerberg's ideas we investigate the relation between $m$-positivity, positive supercurrents and introduce the relative capacity with respect to tropical varieties. The main result in Section 3 gives us the definition of supercurrents of the type $(dd^{\#}u)^k\wedge\beta^{n-m}\wedge T$ (*) for continuous $m$-subharmonic functions $u$ and then we give a generalization of H.Cartan's well-known result on quasicontinuity of subharmonic functions to $m$-subharmonic functions of Riemannian spaces. In section 4 we show that the definition of the supercurrents of type (*) can actually be extended to the class of locally bounded $m$-subharmonic functions. In the last section, first we introduce the indicators and Newton numbers of $m$-subharmonic functions and then give a geometric characterization  of Newton numbers as the Hausdorff measure of certain sets.

\section{Preliminaries}
In this section we will give the preliminary definitions and some
important results that we will use throughout this study by following the work of \cite{BB18} and \cite{Lagerberg12}.
Let us start with the abstract superspace setting:

Let $E$ be an $n$-dimensional vector space over $\mathbb{R}$ that we can identify with $\mathbb{R}^n$. \emph{Superspace} of $E$ is defined to be $E_s=E_0\oplus E_1$ with $(x_1,\dots, x_n)$ and $(\xi_1,\dots, \xi_n)$ being the coordinates of $E_0$ and $E_1$ respectively.

A \emph{superform} on $E$ is a form of the following structure
$$
a=\sum a_{IJ}(x)dx_{I}\wedge d\xi_{J}
$$ where the coefficients $a_{IJ}$ do not depend on $\xi$ variables. The differential operators over these forms are defined as follows
$$
da=\sum\frac{\partial a_{IJ}}{\partial x_k}dx_k\wedge dx_I\wedge d\xi_J~~d^{\#}a=\sum\frac{\partial a_{IJ}}{\partial x_k}d\xi_k\wedge dx_I\wedge d\xi_J.
$$

A \emph{supercurrent} can be written as
$$
T=\sum T_{IJ}dx_I\wedge d\xi_J
$$ where $T_{IJ}$ are distributions on $E_0$. Let $J$ be the complex structure on $T^*(E_s)$ then
$$
J(dx_i)=d\xi_i~~J(d\xi_i)=-dx_i~~\text{and}~~J(a)=a^{\#}
$$ where $d\xi_i=dx_i^{\#}$.

Now we will define the integration in this superformalism. Given a superform $\alpha= \alpha_0 dx\wedge d\xi$ of bidegree $(n,n)$ where $dx=dx_1\wedge\dots\wedge dx_n$ and $d\xi=dx^{\#}$, we write
$$
\int_{E_s}\alpha=\int_{E_0}\alpha_0 dx\int_{E_1}d\xi.
$$

When the oriantation of $E_0$ is chosen and $\alpha_0$ has a convergent integral, the integral over $E_0$ is well-defined. When $dx_i$ are orthonormal and oriented for the integral over $E_1$ we set
$$
C_n\int_{E_1}d\xi=1
$$
where $C_n=(-1)^{n(n-1)/2}$ is the constant which makes the integral of $\alpha=\alpha_0 dx_1\wedge\dots\wedge dx_n\wedge d\xi$ positive when $\alpha_0$ is positive. It can be observed from the following equality that the value of the superintegral is invariant under the change of orientation:
$$
\int_{\mathbb{R}^{n}_s}\alpha= C_n \int_{\mathbb{R}^{n}_s}\alpha_0 dx\wedge d\xi=\left(\int_{\mathbb{R}^n}\alpha_0\right)\left(C_n\int_{\mathbb{R}^n}d\xi\right)=\int_{\mathbb{R}^n}\alpha_0 dx.
$$

A superform $\alpha$ of bidegree $(n-m, n-m)$ is (weakly) positive if
$$
\alpha\wedge\alpha_1\wedge\alpha_1^{\#}\wedge\dots\wedge\alpha_m\wedge\alpha_m^{\#}\geq0
$$ at every point for any choice of $(1,0)$-forms $\alpha_j$.

A supercurrent $T$ of bidegree $(n-m, n-m)$ is (weakly) positive if
$$
\alpha\wedge\alpha_1\wedge\alpha_1^{\#}\wedge\dots\wedge\alpha_m\wedge\alpha_m^{\#}\geq0
$$ for any choice of compactly supported $(1,0)$-forms $\alpha_j$.

\begin{rem}
We have the following properties for the superforms and supercurrents given above (For more details see \cite{BB18}):

\begin{enumerate}
\item A positive superform of bidegree $(1,1)$ is $\alpha=\sum\alpha_{JK}dx_J\wedge d\xi_K$ and $[\alpha_{JK}]$ is a positive semidefinite matrix.
\item K\"{a}hler form in $\mathbb{R}^n$ is given by the form $\beta\coloneqq\sum dx_J\wedge d\xi_J=(1/2)dd^\#|x|^2$.
\item If $\phi$ is a smooth function on $\mathbb{R}^n$ then $\phi$ is convex if and only if $dd^\#\phi$ is a positive superform.
\item A general but possibly not smooth function $\phi$ is convex if and only if $dd^\#\phi$ is a positive supercurrent.
\end{enumerate}

\end{rem}

In the classical pluripotential theory one of the main problems with a vast literature on it is to understand the positive forms and currents and their relation with the complex Monge-Am\`{e}re equation. In \cite{Blocki}, Blocki initiated the study of $m$-positive forms in relation to the complex Hessian equation. Just as the classical theory of positive forms and currents is strongly related to subharmonic/plurisubharmonic functions, $m$-pluripotential theory is interested in the study of the $m$-subharmonic functions which are in fact subclasses in the spectrum between subharmonic and plurisubharmonic functions, i.e $PSH=P_n\subset\dots\subset P_1=SH$ where $P_i$ is the class of $i$-subharmonic functions. In this study we will consider the notion of $m$-positivity in the broad frame of real superformalism.

\begin{defn}
A $(1,1)$-superform $\alpha$ is m-positive on $E_s$ if at every point of $E_s$ we have $\alpha^j\wedge\beta^{n-j}\geq 0$ for all $j=1,\dots,m$.
\end{defn}
Following the lines of the corresponding complex analysis arguments by \cite{Blocki} we have the following result:

\begin{prop}\label{blocki-conv}
Let $1\leq p\leq m$. If $\alpha_1,\dots,\alpha_p$ are m-positive $(1,1)$-superforms then $\alpha_1\wedge\dots\wedge\alpha_p\wedge\beta^{n-m}\geq 0$.
\end{prop}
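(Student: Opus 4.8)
The plan is to reduce the pointwise inequality to a statement about symmetric matrices and then apply G\aa rding's theory of hyperbolic polynomials. Since $m$-positivity and positivity of superforms are pointwise conditions, I would fix a point of $E_s$ and identify each $(1,1)$-superform $\alpha_i=\sum (\alpha_i)_{JK}\,dx_J\wedge d\xi_K$ with its symmetric coefficient matrix $A_i=[(\alpha_i)_{JK}]$, as in the Remark, so that $\beta$ corresponds to the identity $I$. Diagonalizing a single $\alpha$ against $\beta$ and expanding the wedge powers, one verifies the bookkeeping identity that $\alpha^{j}\wedge\beta^{n-j}$ equals a strictly positive constant times $S_j(\lambda)\,dx\wedge d\xi$, where $\lambda_1,\dots,\lambda_n$ are the eigenvalues of $A$ and $S_j$ is the $j$-th elementary symmetric polynomial; the constant is positive by the orientation fixed in the definition of the superintegral. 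Consequently $\alpha$ is $m$-positive exactly when $S_1(A),\dots,S_m(A)\ge 0$.

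Next I would reduce the range $1\le p<m$ to the top-degree case $p=m$. By definition, positivity of the bidegree $(n-(m-p),n-(m-p))$ superform $\alpha_1\wedge\dots\wedge\alpha_p\wedge\beta^{n-m}$ means that its wedge with $\gamma_1\wedge\gamma_1^{\#}\wedge\dots\wedge\gamma_{m-p}\wedge\gamma_{m-p}^{\#}$ is a nonnegative top form for every choice of $(1,0)$-forms $\gamma_i$. Each $\gamma_i\wedge\gamma_i^{\#}$ has rank-one positive semidefinite matrix $c_ic_i^{T}$, with eigenvalues $(\lVert c_i\rVert^2,0,\dots,0)$, so $S_1\ge 0$ and $S_j=0$ for $j\ge 2$; hence it is $m$-positive. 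Because $(1,1)$-superforms have even degree and therefore commute under $\wedge$, setting $\alpha_{p+i}:=\gamma_i\wedge\gamma_i^{\#}$ turns the required inequality into $\alpha_1\wedge\dots\wedge\alpha_m\wedge\beta^{n-m}\ge 0$ for the $m$ many $m$-positive forms $\alpha_1,\dots,\alpha_m$. Thus it suffices to treat $p=m$.

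For $p=m$ I would pass to the polarization of $S_m$. The map $(\alpha_1,\dots,\alpha_m)\mapsto \alpha_1\wedge\dots\wedge\alpha_m\wedge\beta^{n-m}$ is symmetric and multilinear in its arguments, and on the diagonal it gives $\alpha^m\wedge\beta^{n-m}=\kappa\,S_m(A)\,dx\wedge d\xi$ with $\kappa>0$, by the identity of the first paragraph. By polarization its coefficient therefore equals $\kappa\,M(A_1,\dots,A_m)$, where $M$ is the unique symmetric multilinear form with $M(A,\dots,A)=S_m(A)$. Now $S_m$ is hyperbolic with respect to $I$ on the space of symmetric matrices, with G\aa rding cone $\Gamma_m=\{S_1>0,\dots,S_m>0\}$. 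For an $m$-positive $A_i$, the expansion $S_j(A_i+\epsilon I)=\sum_{k=0}^{j}\binom{n-k}{j-k}\epsilon^{j-k}S_k(A_i)$ has all terms nonnegative with leading term $\binom{n}{j}\epsilon^{j}$, so $S_j(A_i+\epsilon I)>0$ for $1\le j\le m$ and every $\epsilon>0$; hence $A_i+\epsilon I\in\Gamma_m$. G\aa rding's inequality then gives $M(A_1+\epsilon I,\dots,A_m+\epsilon I)\ge 0$, and letting $\epsilon\to 0$ by continuity of $M$ yields $M(A_1,\dots,A_m)\ge 0$, which is the asserted positivity.

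The main obstacle is structural rather than computational: the forms $\alpha_1,\dots,\alpha_p$ cannot in general be simultaneously diagonalized against $\beta$, so the conclusion cannot be read off from the eigenvalues of a single matrix the way the hypothesis can. This is exactly what forces the passage to the multilinear polarization and the use of G\aa rding's inequality for the hyperbolic polynomial $S_m$; once the dictionary between superforms and symmetric matrices is in place, the two combinatorial identities and the perturbation $A_i+\epsilon I$ into the G\aa rding cone are routine.
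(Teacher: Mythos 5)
Your proof is correct and takes essentially the same route as the paper: the paper's justification is a one-line remark deferring to G\aa rding's inequality as in Blocki (p.~1740), resting on exactly the facts you spell out, namely that each $\gamma_i\wedge\gamma_i^{\#}$ is $m$-positive (rank one, so $S_j=0$ for $j\geq 2$) and that the polarized form of $S_m$ is nonnegative on the closure of the G\aa rding cone. Your write-up merely makes explicit the matrix dictionary, the reduction from $p<m$ to $p=m$, and the polarization and perturbation details that the paper leaves to the cited reference.
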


\begin{rem}
For a $(1,0)$-superform $\gamma$, $(\gamma\wedge\gamma^\#)^2=0$ holds so the argument directly follows from Garding inequality given in (\cite{Blocki}, pp:1740).
\end{rem}

\begin{defn}
Let $\varphi$ be a $(p,p)$-superform on $E_s$ and $T$ be a supercurrent of bidegree $(p,p)$ on $E_s$. Let $p\leq m\leq n$ then we say
\begin{enumerate}
\item[(i)] $\varphi$ is m-positive on $E_s$ if at every point on $E_s$ one has
$$
\varphi\wedge\beta^{n-m}\wedge\alpha_1\wedge\dots\wedge\alpha_{m-p}\geq 0
$$ for all $\alpha_1,\dots,\alpha_{m-p}$, m-positive $(1,1)$-superforms.

\item[(ii)] $T$ is m-positive if $\langle T,\beta^{n-m}\wedge\varphi\rangle\geq0$ for all $\varphi$ m-positive $(m-p,m-p)$ superform on $E_s$.
\end{enumerate}

\end{defn}

\begin{defn}
A smooth function $\phi$ is called \emph{m-subharmonic} if the $(1,1)$-form $dd^\#\phi$ is $m$-positive at every point of $E_s$.

Then a locally integrable function $u$ is \emph{m-subharmonic} if the supercurrent
$$
dd^{\#}u\wedge\beta^{n-m}\wedge\upsilon_1\wedge\dots\wedge\upsilon_{m-1}\geq 0
$$ for all $m$-positive $(1,1)$-forms $\upsilon_1,\dots,\upsilon_{m-1}$.

\end{defn}

\begin{rem}
Note that in this study although we try to construct $m$-pluripotential theory in the superformalism initiated by \cite{BB18}, there is a basic difference in how we define our main objects of interest, namely $m$-subharmonic functions. As it can be seen from the basic example $u=x^{2}_{1}+x^{2}_{2}-x^{2}_{3}$ in $\mathbb{R}^3$ that $u$ is $2$-subharmonic in Berndtsson sense but not in Blocki's. Hence in our study we follow the idea of \cite{ADFE} in order to give a suitable generalization of the standard notion of $m$-positivity to the superspaces.
\end{rem}

Now we will list the properties of m-subharmonic functions (denoted as $sh_m$ for the rest of the study) that follow directly from the definitions [See \cite{AS12} for the complex analogues of these results]:

\begin{prop}\label{propofmsub}
\begin{enumerate}
\item[(1)] If $u$ is of class $\mathcal{C}^2$ then $u$ is m-subharmonic if and only if $dd^{\#}u$ is m-positive on $E_s$.
\item[(2)] Let $\eta_h$, $(0<h\leq 1)$, be an approximate identity on $\mathbb{R}^n$ then the regularization $u_\eta=u\ast\eta_h$ with $u$ m-subharmonic is m-subharmonic.
\item[(3)] If $u,\upsilon$ are m-subharmonic and $a,b\geq0$ then $au+b\upsilon$ and $\max{u,\upsilon}$ are also m-subharmonic.
\item[(4)] If $(u_\alpha)$ is a family of m-subharmonic functions and $u=\sup u_\alpha$ is upper semicontinuous then $u$ is m-subharmonic.
\item[(5)] Convex functions=$sh_n\subset\dots\subset sh_1$= subharmonic functions.
\end{enumerate}
\end{prop}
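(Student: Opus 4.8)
The plan is to treat the five properties in an order that lets the later ones build on the earlier, deducing everything from the two notions of $m$-positivity together with Proposition~\ref{blocki-conv}. For (1), the content is that for a $(1,1)$-superform the description through the powers $\alpha^{j}\wedge\beta^{n-j}\ge 0$ and the description through the pairings $\alpha\wedge\beta^{n-m}\wedge\alpha_{1}\wedge\dots\wedge\alpha_{m-1}\ge 0$ against $m$-positive forms coincide. The implication from powers to pairings is exactly Proposition~\ref{blocki-conv} applied to the $m$-positive forms $dd^{\#}u,\alpha_{1},\dots,\alpha_{m-1}$. For the converse I would test the pairing condition against the admissible choice $\alpha_{1}=\dots=\alpha_{m-1}=\beta$, which is itself $m$-positive, to recover $dd^{\#}u\wedge\beta^{n-1}\ge 0$, and then feed in mixed products of $\beta$ and $dd^{\#}u$; the full equivalence reflects the self-duality of the Garding cone underlying Proposition~\ref{blocki-conv}. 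Property (5) is then immediate: enlarging $m$ only adds the constraints $\alpha^{j}\wedge\beta^{n-j}\ge 0$, which gives the nesting $sh_{n}\subset\dots\subset sh_{1}$; the endpoint $sh_{1}$ is subharmonicity because $dd^{\#}u\wedge\beta^{n-1}\ge 0$ is the trace (Laplacian) condition, and $sh_{n}$ is convexity by the characterisation of convex functions through positivity of $dd^{\#}u$ recalled earlier.

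For (2) I would use that $dd^{\#}$ commutes with convolution by the constant-coefficient kernel $\eta_{h}$, so that for any $m$-positive test form $\varphi$ one has $\langle dd^{\#}u_{\eta},\beta^{n-m}\wedge\varphi\rangle=\langle dd^{\#}u,\beta^{n-m}\wedge(\check\eta_{h}\ast\varphi)\rangle$, the factor $\beta^{n-m}$ passing through the convolution since its coefficients are constant. As $\eta_{h}\ge 0$, the convolved form $\check\eta_{h}\ast\varphi$ is an average of translates of $\varphi$, and because the $m$-positive forms constitute a translation-invariant convex cone it is again $m$-positive; the right-hand pairing is therefore nonnegative. The linear part of (3) is immediate from the linearity of $dd^{\#}$ and of the defining pairings, since $a,b\ge 0$ keeps the combination in the cone; I defer the statement about $\max(u,\upsilon)$ to (4), of which the finite supremum is the special case of a two-element family.

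The main obstacle is (4), the only property requiring a genuine limiting argument. I would first invoke Choquet's lemma to replace the possibly uncountable family by a countable one with the same upper semicontinuous regularised supremum, and then pass to the increasing sequence of finite maxima $v_{k}=\max(u_{\alpha_{1}},\dots,u_{\alpha_{k}})$; a regularisation argument (approximating by smooth $m$-subharmonic functions as in (2) and using that the pointwise maximum of $m$-positive Hessians remains $m$-positive) shows each $v_{k}$ is $m$-subharmonic. Since $v_{k}\uparrow u$ with $u$ upper semicontinuous, one has $v_{k}\to u$ in $L^{1}_{loc}$, hence $dd^{\#}v_{k}\to dd^{\#}u$ weakly; the conclusion follows because the cone of $m$-positive supercurrents is closed under weak limits, the inequalities $\langle dd^{\#}v_{k},\beta^{n-m}\wedge\varphi\rangle\ge 0$ being stable in the limit. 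The delicate step is the finite-maximum claim, where the non-smoothness of $\max$ must be controlled; I expect to manage it with the standard regularised-maximum construction, keeping the functions inside the class before taking limits, which simultaneously establishes the $\max$ assertion of (3).
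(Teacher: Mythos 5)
The paper itself gives no proof of Proposition \ref{propofmsub} (it records that the properties ``follow directly from the definitions'' and points to the complex analogues in the cited work of Abdullaev--Sadullaev), so your proposal has to stand on its own; its overall architecture is the standard one, and items (2), the linear half of (3), and (5) are essentially sound as you argue them. However, two steps have genuine gaps. First, in the converse direction of (1) your concrete recipe is circular: the test forms in the pairing definition must themselves be $m$-positive, so you may not ``feed in mixed products of $\beta$ and $dd^{\#}u$'' -- the $m$-positivity of $dd^{\#}u$ is exactly what you are trying to prove. Testing with $\upsilon_1=\dots=\upsilon_{m-1}=\beta$ only recovers $dd^{\#}u\wedge\beta^{n-1}\geq 0$, i.e.\ the $j=1$ inequality; none of the inequalities $(dd^{\#}u)^{j}\wedge\beta^{n-j}\geq 0$ for $2\leq j\leq m$ follow this way. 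What is actually needed is the duality theorem for the G\r{a}rding cone, $\overline{\Gamma}_m=\{\alpha:\ \alpha\wedge\upsilon_1\wedge\dots\wedge\upsilon_{m-1}\wedge\beta^{n-m}\geq 0 \ \text{for all $m$-positive}\ \upsilon_i\}$, which is a nontrivial fact from the theory of hyperbolic polynomials (reduce to $\upsilon_1=\dots=\upsilon_{m-1}=\upsilon$, recognize the pairing as a positive multiple of $\langle\nabla\sigma_m(\lambda(\upsilon)),\lambda(\alpha)\rangle$, and show these half-spaces cut out exactly $\overline{\Gamma}_m$); it does not follow formally from Proposition \ref{blocki-conv} and must be proved or cited (it is in Blocki's paper).

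Second, and more seriously, the lemma on which your treatment of the maximum in (3) and (4) rests is misstated: ``the pointwise maximum of $m$-positive Hessians remains $m$-positive'' is not a meaningful justification, since $dd^{\#}\max(u,\upsilon)$ is not any kind of maximum of $dd^{\#}u$ and $dd^{\#}\upsilon$ (symmetric matrices carry no order-maximum, and the distributional Hessian of a max has an extra singular positive part concentrated on $\{u=\upsilon\}$). The statement that makes the regularized-maximum construction work is: if $\chi:\mathbb{R}^2\rightarrow\mathbb{R}$ is smooth, convex, and nondecreasing in each variable, and $u,\upsilon$ are smooth $m$-subharmonic, then $\chi(u,\upsilon)$ is $m$-subharmonic, because $dd^{\#}\chi(u,\upsilon)=\chi_{s}\,dd^{\#}u+\chi_{t}\,dd^{\#}\upsilon+Q$, where $\chi_s,\chi_t\geq0$ and, by the positive semidefiniteness of the Hessian of $\chi$, the remainder $Q$ is a nonnegative combination of forms $\gamma\wedge\gamma^{\#}$ with $\gamma$ of bidegree $(1,0)$ built from $du,d\upsilon$; such rank-one forms are $m$-positive, so the whole expression lies in the cone. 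With this lemma the regularized max of the smooth approximants from (2) is $m$-subharmonic and one concludes by decreasing limits; without it, your claim that each finite maximum $v_k$ is $m$-subharmonic has no proof. A smaller point in (4): Choquet's lemma produces a countable subfamily whose supremum $w$ satisfies $w^{*}=u$, not $w=u$, so your $v_k$ increase to $w$ rather than to $u$; to conclude $dd^{\#}v_k\rightarrow dd^{\#}u$ weakly you must also invoke the Brelot--Cartan theorem that $w=w^{*}$ almost everywhere, which is legitimate here because $m$-subharmonic functions are in particular subharmonic by (5).
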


\section{Tropical Geometry and Relative Capacity With Respect to Tropical Varieties}

We will start this with the necessary background information about tropical varieties and how their geometry is related to supercurrents (For a detailed treatment of these concepts see \cite{Lagerberg12}). Let $A$ be a finite set in $\mathbb{Z}^n$ and $P=conv(A)$ be the convex hull of $A$ in $\mathbb{R}^n$.
\begin{defn}
A tropical polynomial is a function $f(x)=\max_{\alpha\in A}\left(-\upsilon(\alpha)+\alpha \cdot x\right)$ where $\upsilon:A\rightarrow\mathbb{R}$ is an arbitrary function. For a tropical polynomial the associated tropical hypersurface $V_f$ is the set where the convex function $f$ is not smooth.
\end{defn}

Although the set theoretic intersection of two tropical hypersurfaces need not correspond to the support of a tropical variety (see \cite{Lagerberg12} for an example), the intersection of tropical hypersurfaces can be defined in compliance with the intersection theory of the tropical geometry described in the classical literature as follows:

\begin{defn}
Let $f_1,\dots,f_p$ be tropical polynomials with corresponding tropical hypersurfaces $V_{f_1},\dots,V_{f_p}$ then the intersection of $V_{f_1},\dots,V_{f_p}$ is defined as the $(p,p)$- strongly positive closed supercurrent
\begin{equation}\label{TropVarofcodimp}
V=V_{f_1}\wedge\dots\wedge V_{f_p}\coloneqq dd^{\#}f_1\wedge\dots\wedge dd^{\#}f_p.
\end{equation}
\end{defn}

By (\cite{Lagerberg12}, Prop.4.23) we know that $V$ is a tropical variety of co-dimension $p$ and the support of a strongly positive $(p,p)$-supercurrent whose support has co-dimension $p$ and where one demands that each of the affine pieces should have rational slope.

In the literature of pluripotential theory and analytic function theory several types of capacities were introduced to study the regularity of the sets. One of the most well-known of these capacities is the Monge-Amp\`{e}re capacity introduced by Bedford-Taylor in their seminal work \cite{BT82}. Later this important tool was used in the global theory of Monge-Amp\`{e}re equation over the compact K\"{a}hler manifolds by \cite{GZ05} in connection with other complex variants as Alexander capacity and Tchebychev constants. In \cite{ADFE} the idea of capacity is generalized to the relative $m$-capacity where the capacity of each compact is calculated in association with an $m$-positive closed current. In this current work we are interested in understanding the relative capacity with respect to the closed $m$-positive supercurrents which are closely related to the intersection of tropical hypersurfaces in superspaces. Just as in the classical literature of complex pluripotential theory we will use capacity to generalize a famous result of H. Cartan to show the quasicontinuity of the $m$-subharmonic functions (where the original result of Cartan is about the quasicontinuity of subharmonic functions in other words they are continuous up to a set of zero capacity).

Now using the varieties described above we are going to define the relative capacity associated to tropical varieties:
\begin{defn}
Let $D$ be an open set in $\mathbb{R}^n$ and $V\subset D$ be a tropical variety of co-dimension $p$ given as in (\ref{TropVarofcodimp}). Then the relative capacity associated to $V$ is given by, for any $K\subset D$ compact and $n\geq m\geq p$
\begin{equation}\label{capacity}
cap_{V,m}(K,D)=\sup\left\{\int_{K}(dd^{\#}\phi)^{m-p}\wedge\beta^{n-m}\wedge V,~ \phi\in sh_{m}(D), ~0\leq\phi\leq1\right\}
\end{equation} and for every $E\subset D$
$$
cap_{V,m}(E,D)=\sup\{cap_{V,m}(K,D),~~K~\text{compact in}~E\}.
$$
\end{defn}

Following the original definition of Bedford-Taylor, the capacity mentioned above has clearly the following properties and since the analogues have been proved by many authors in the literature we leave the proofs to the reader:
\begin{prop}
\begin{enumerate}
\item[(i)] If $G$ is a Borel set in $D$ then
$$
cap_{V,m}(G,D)=\sup\left\{\int_{G}(dd^{\#}\phi)^{m-p}\wedge\beta^{n-m}\wedge V,~ \phi\in sh_{m}(D), ~0\leq\phi\leq1\right\}
$$
\item[(ii)] If $G_1\subset G_2$ then $cap_{V,m}(G_1,D)\leq cap_{V,m}(G_2,D)$.
\item[(iii)] If $G_1,G_2,\dots$ are subsets of $D$ then $cap_{V,m}\left(\bigcup_{j=1}^{\infty}G_j,D\right)\leq\sum_{j=1}^{\infty}cap_{V,m}(G_j,D)$
\item[(iv)] If $G_1\subset G_2 \subset\dots$ are Borel sets of $D$ then we have $cap_{V,m}(\bigcup_{j=1}^{\infty}G_j,D)=\lim_{j\rightarrow\infty}cap_{V,m}(G_j,D)$.
\end{enumerate}
\end{prop}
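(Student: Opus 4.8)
The plan is to reduce all four statements to standard facts about the positive Radon measures attached to the admissible competitors. For each $\phi\in sh_m(D)$ with $0\le\phi\le1$, set
$$
\mu_\phi:=(dd^{\#}\phi)^{m-p}\wedge\beta^{n-m}\wedge V .
$$
First I would verify that $\mu_\phi$ is a positive Radon measure on $D$. This is the crux: since $V=dd^\#f_1\wedge\dots\wedge dd^\#f_p$ is a strongly positive closed $(p,p)$-supercurrent and $\beta$ is the K\"{a}hler form, the top-degree object $\mu_\phi$ is obtained by wedging $m-p$ copies of the $m$-positive current $dd^\#\phi$ with the $m$-positive forms $\beta$ and with $V$; positivity of the resulting $(n,n)$-current then follows from Proposition \ref{blocki-conv} together with the definition of $m$-positivity for currents. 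For non-smooth $\phi$ the very meaning of the wedge $(dd^\#\phi)^{m-p}$ is what must be invoked from the constructions developed in the later sections (for continuous, respectively locally bounded, $m$-subharmonic $\phi$), and this is where I expect the only genuine difficulty to lie; granting it, $\mu_\phi\ge0$ is a positive measure and all that remains is bookkeeping.

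Property (ii) is then immediate from the inner-regularization definition: if $G_1\subset G_2$ then every compact $K\subset G_1$ is also a compact subset of $G_2$, so the defining supremum for $cap_{V,m}(G_2,D)$ runs over a larger family and dominates that for $cap_{V,m}(G_1,D)$. For (i), write $Q(G)=\sup_{\phi}\mu_\phi(G)$ for the right-hand side. For the inequality $cap_{V,m}(G,D)\le Q(G)$ I would use positivity of $\mu_\phi$: for a compact $K\subset G$ one has $\int_K\mu_\phi\le\mu_\phi(G)\le Q(G)$, and taking the supremum first over $\phi$ and then over $K$ gives the claim. For the reverse inequality I would use that a positive Radon measure restricted to the Borel set $G$ is inner regular, so $\mu_\phi(G)=\sup\{\mu_\phi(K):K\subset G \text{ compact}\}\le cap_{V,m}(G,D)$ for each fixed $\phi$; a supremum over $\phi$ then yields $Q(G)\le cap_{V,m}(G,D)$.

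With (i) available, (iv) follows by interchanging suprema with the monotone limit on measures. Monotonicity (ii) already gives $\lim_j cap_{V,m}(G_j,D)\le cap_{V,m}(\bigcup_jG_j,D)$; conversely, continuity from below of each $\mu_\phi$ gives $\mu_\phi(\bigcup_jG_j)=\sup_j\mu_\phi(G_j)$, and since both the supremum over $j$ and the supremum over $\phi$ are suprema they commute, so by (i) $cap_{V,m}(\bigcup_jG_j,D)=\sup_\phi\sup_j\mu_\phi(G_j)=\sup_j cap_{V,m}(G_j,D)$.

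Finally, for (iii) I would combine (i) with countable subadditivity of the positive measures $\mu_\phi$ and the elementary fact that a supremum of a sum is bounded by the sum of the suprema: $\mu_\phi(\bigcup_jG_j)\le\sum_j\mu_\phi(G_j)\le\sum_j cap_{V,m}(G_j,D)$, whence $\sup_\phi\mu_\phi(\bigcup_jG_j)\le\sum_j cap_{V,m}(G_j,D)$; for Borel $G_j$ this is exactly (iii) via (i). The passage to arbitrary subsets is then handled by the inner/outer regularization reductions of the classical Bedford-Taylor theory, exactly as in the complex analogues. Throughout, the only step that is not pure measure theory is the positivity and well-definedness of $\mu_\phi$ recorded in the first paragraph, so that is where I would concentrate the actual work.
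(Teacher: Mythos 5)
The paper itself contains no proof of this proposition: it is explicitly ``left to the reader'' as a routine adaptation of the Bedford--Taylor arguments in \cite{BT82}, so there is no argument in the text to compare yours against, and your proposal is precisely the standard argument that the paper's reference points to. For (i), (ii) and (iv) it is correct as written: once each $\mu_\phi=(dd^{\#}\phi)^{m-p}\wedge\beta^{n-m}\wedge V$ is known to be a positive, locally finite Borel (hence Radon, hence inner regular) measure, (ii) is immediate from the inner-regularized definition, (i) follows from positivity in one direction and from inner regularity on Borel sets in the other, and (iv) follows from (i) by combining continuity from below of each $\mu_\phi$ with the interchange of the two suprema. You are also right that the only genuinely analytic point is the existence and positivity of $\mu_\phi$ for non-smooth bounded competitors $\phi$; note that in the paper this is only supplied in Sections 3 and 4, i.e.\ \emph{after} the proposition is stated, an ordering issue the paper inherits from its sources and does not comment on.

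The one genuine gap is in (iii), which the statement asserts for \emph{arbitrary} subsets $G_j\subset D$, not just Borel ones. Your measure-theoretic argument proves the Borel case, but the closing appeal to ``inner/outer regularization reductions'' cannot repair the general case: for an inner-regularized capacity, subadditivity over non-Borel sets is not a formal consequence of the Borel case, and it actually fails in any regime where the measures $\mu_\phi$ charge no countable set (this is what happens in the complex model of \cite{BT82}, where Monge--Amp\`ere measures of bounded plurisubharmonic functions charge no pluripolar set, and it is the expected behaviour here at least when $m$ is small relative to $n$). Indeed, split a closed cube $Q\subset D$ into a Bernstein set $A$ and its complement $Q\setminus A$ (both meet every uncountable compact subset of $Q$, so every compact subset of either piece is countable): then $cap_{V,m}(A,D)=cap_{V,m}(Q\setminus A,D)=0$ while $cap_{V,m}(Q,D)>0$, so even finite subadditivity fails for arbitrary sets. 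The correct formulations are either (iii) restricted to Borel (more generally, capacitable) sets --- which is what your argument establishes and what is actually used later in the paper --- or subadditivity for the outer capacity $\inf\left\{cap_{V,m}(U,D):\ U\supset E,\ U\ \text{open}\right\}$, for which a covering argument with open sets $U_j\supset G_j$ chosen within $\varepsilon 2^{-j}$ does go through. Since the paper states (iii) in the same imprecise form and proves nothing, your write-up is no less rigorous than the source; but to make it correct you should either add the Borel hypothesis in (iii) or pass to the outer capacity.
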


Now we can define the pluripolar sets with respect to a tropical variety:
\begin{defn}
A subset $E\subset D$ is called $(V,m)$-pluripolar in $D$ if $cap_{V,m}(E,D)=0$.
\end{defn}
In \cite{BB18}, the idea of m-pluripolarity is given in relation with m-subharmonic functions as follows:
\begin{defn}
A set $F$ is $m$-polar if there is an m-subharmonic function which is equal to $-\infty$ on $F$.
\end{defn}
In the next result we will give the correlation between these two polarity concepts:
\begin{prop}
Let $V$ be a tropical, linear variety of $\mathbb{R}^n$ with co-dimension $p$. Then a set $G\subset D$ is $(V,m)$-pluripolar if and only if $G\cap V$ is $(m+p-n)$-polar in $V\cap D$.
\end{prop}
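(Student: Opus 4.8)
The plan is to exploit the linearity of $V$ in order to descend the whole problem onto its supporting subspace, where both notions become intrinsic and the required equivalence is the already known one between null capacity and polarity. Since $V$ is a \emph{linear} tropical variety of codimension $p$, the structure theorem quoted after \eqref{TropVarofcodimp} (\cite{Lagerberg12}, Prop.~4.23) forces its support to be a single affine subspace $W\subset\mathbb{R}^n$ with $\dim W=n-p$, and forces $V$ itself to be a positive constant multiple of the integration supercurrent $[W]$. After an orthogonal change of coordinates — which leaves $\beta$, the operators $d,d^{\#}$, the superstructure and hence $cap_{V,m}$ invariant — I would assume $W=\mathbb{R}^{n-p}\times\{0\}$ in split coordinates $(x',x'')\in\mathbb{R}^{n-p}\times\mathbb{R}^{p}$, so that $\beta=\beta'+\beta''$ with $\beta'$ the K\"ahler form of $W$ and $V=c\,[W]$.

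The first genuine step is to identify $cap_{V,m}(\cdot,D)$ with an \emph{intrinsic} relative capacity on $W$. Writing $\iota\colon W\hookrightarrow\mathbb{R}^n$ for the inclusion and using that $\iota^{*}$ commutes with $dd^{\#}$ and that $\iota^{*}\beta=\beta'$, the defining integral in \eqref{capacity} pulls back to
$$
\int_{G}(dd^{\#}\phi)^{m-p}\wedge\beta^{n-m}\wedge V=c\int_{G\cap W}\bigl(dd^{\#}(\phi|_{W})\bigr)^{m-p}\wedge\beta'^{\,n-m}.
$$
On the $(n-p)$-dimensional space $W$ the right-hand integrand is precisely the top-order Hessian measure of order $m-p$, since $(m-p)+(n-m)=n-p=\dim W$; thus the order of subharmonicity induced on $V$ is $m-p=m-\operatorname{codim}V$. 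To upgrade this to a relation between capacities I need two facts. \emph{Restriction}: if $\phi\in sh_m(D)$ then $\phi|_W\in sh_{m-p}(W\cap D)$; pointwise this is the Garding-cone interlacing statement that compressing a Hessian lying in $\overline{\Gamma_m^{(n)}}$ to a subspace of codimension $p$ lands in $\overline{\Gamma_{m-p}^{(n-p)}}$, and it can equally be read off from the positivity of $\iota^{*}\bigl((dd^{\#}\phi)^{j}\wedge\beta^{n-j}\bigr)$ via Proposition~\ref{blocki-conv}. This gives $cap_{V,m}(G,D)\le c\,cap^{W}_{m-p}(G\cap W,W\cap D)$. \emph{Extension}: every intrinsic competitor $\psi\in sh_{m-p}(W\cap D)$ with $0\le\psi\le1$ must be reproduced, up to the value of the functional, by the restriction of some ambient $\phi\in sh_m(D)$.

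The extension direction is where I expect the main obstacle. The naive transverse-constant extension $\tilde\psi(x',x'')=\psi(x')$ is only $(m-p)$-positive along $W$ and is \emph{not} $m$-subharmonic on $\mathbb{R}^n$, because the transverse Hessian directions are missing. The remedy I would pursue is to add a small strictly convex transverse term, $\phi_{\varepsilon}=\tilde\psi+\varepsilon|x''|^{2}$: the extra curvature $2\varepsilon\beta''$ fills in the transverse eigenvalues and, for $\varepsilon$ in a suitable range, restores $m$-positivity of $dd^{\#}\phi_\varepsilon$ at every order up to $m$ (again by Proposition~\ref{blocki-conv}); one then truncates by $\max$ and cut-offs to remain in $sh_m(D)\cap\{0\le\cdot\le1\}$, using Proposition~\ref{propofmsub}. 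Since $V$ is carried by $W=\{x''=0\}$, the term $\varepsilon|x''|^{2}$ contributes nothing to the integral against $V$ as $\varepsilon\downarrow0$, so the reverse inequality follows and $cap_{V,m}(G,D)=c\,cap^{W}_{m-p}(G\cap W,W\cap D)$; in particular the two capacities vanish simultaneously.

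With this capacity identity in hand the conclusion is immediate from the intrinsic theory on the $(n-p)$-dimensional slice $W$: by the $m$-subharmonic Bedford--Taylor/Cartan machinery (the quasicontinuity theorem of Section~3 applied on $W$, together with \cite{ADFE} and \cite{AS12}) a Borel set $F\subset W\cap D$ has zero relative $(m-p)$-capacity if and only if it is $(m-p)$-polar in $W\cap D$. Combining this with the reduction above, $cap_{V,m}(G,D)=0$ if and only if $G\cap V$ is polar of the induced order in $V\cap D$, which is the assertion. The two places carrying all the weight are the extension step just described and the import of the intrinsic null-capacity $\Leftrightarrow$ polarity equivalence onto the lower-dimensional space $W$.
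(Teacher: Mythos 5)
Your reduction to the supporting subspace $W$ is in substance the same first move as the paper's: the paper realizes the linear variety as the integration supercurrent $[V]_s$, restricts the competitors of \eqref{capacity} to $V$, and invokes the restriction theorem of \cite{AS12}; you do the same, with the (correct) bookkeeping that the induced operator on $W$ is the full $(m-p)$-Hessian, since $(m-p)+(n-m)=n-p=\dim W$. Note in passing that your computation gives the induced polarity order $m-p=m-\operatorname{codim}V$, not the $m+p-n$ of the statement; the degree count in \eqref{capacity} forces $m-p$, and the paper's own displayed identity carries the exponent $m-p$ on one side and $m-n+p$ on the other, so the stated index appears to be a slip (it would be correct if $p$ denoted $\dim V$ rather than its codimension). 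Up to this point your argument, like the paper's, delivers one implication: if $G\cap V$ is polar of the induced order, then the restriction of every ambient competitor is a bounded $(m-p)$-subharmonic function on $W$, its Hessian measure puts no mass on such a set, and hence $cap_{V,m}(G,D)=0$.

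The genuine gap is the extension step, and it cannot be repaired in the way you propose. If $\psi$ is only $(m-p)$-subharmonic on $W$, then $\phi_{\varepsilon}(x',x'')=\psi(x')+\varepsilon|x''|^{2}$ is in general not $m$-subharmonic for any $\varepsilon>0$: take $n=3$, $p=1$, $m=2$, $W=\{x_3=0\}$ and $\psi=x_1x_2$ (harmonic, hence $1$-sh on $W$); the Hessian of $\phi_{\varepsilon}$ has eigenvalues $1,-1,2\varepsilon$, so $\sigma_2=-1<0$ for every $\varepsilon$. The failure is not an artifact of your formula: no $\mathcal{C}^2$ function $\Phi$ that is $2$-sh near a point of $W$ can restrict to this $\psi$, because on $W$ one has $\Phi_{11}=\Phi_{22}=0$, $\Phi_{12}=1$, whence $\sigma_2(D^2\Phi)=-1-\Phi_{13}^{2}-\Phi_{23}^{2}<0$; transverse convexity can never compensate a negative purely tangential minor. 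Consequently $\{\phi|_{W}:\phi\in sh_m(D)\}$ is a proper subclass of $sh_{m-p}(W\cap D)$ (cylinder functions $\psi(x')$ are ambient $m$-sh only when $\psi$ is intrinsically $\min(m,n-p)$-sh), your claimed identity $cap_{V,m}(G,D)=c\,cap^{W}_{m-p}(G\cap W,W\cap D)$ is unproven, and with it the whole implication $cap_{V,m}(G,D)=0\Rightarrow G\cap V$ polar. To be fair, the paper is silent on exactly this direction as well (its proof stops at the restriction identity and asserts the conclusion), so nothing in the paper rescues your step; a complete proof would have to work with the genuinely smaller family of restrictions, for instance through an extremal-function or Josefson-type argument adapted to that family, rather than through an extension theorem that is false.
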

\begin{proof}
For a tropical linear variety of $\mathbb{R}^n$, let the associated supercurrent be $[V]_s$ which is defined as
$$
[V]_s=[V]\wedge n^{\#}=n\wedge n^{\#}*dS_V
$$
where $[V]$ is the current of integration on $V$, $n$ is the unit normal form on $V$ and $dS_V$ is the surface measure on $V$. Then since $V$ is linear $[V]_s$ is a closed, positive, symmetric supercurrent. Let $G$ be an open subset of $D$ and $u\in sh_m(D)$ with $0\leq u\leq 1$ then for $i:V\cap D\hookrightarrow D$ being the natural injection we have
$$
\int_{G}[V]_s\wedge\beta^{n-m}\wedge(dd^{\#}u)^{m-p}=\int_{G\cap V}(i^*\beta)^{n-m}\wedge(dd^{\#}u)^{m-n+p}
$$
and then following the same lines of the complex setting argument in \cite{AS12} we deduce that $u_{|_{G\cap V}}=i^{*}(u)$ is $(m-n+p)$-subharmonic hence the result follows.
\end{proof}

\section{Convergence of m-Positive Supercurrents and Quasicontinuity of m-Subharmonic Functions}

As it is very well known the Monge-Amp\`{e}re operator or Hessian operator cannot be defined over arbitrary plurisubharmonic functions. Hence throughout the development of the pluripotential theory the idea was always to define these operators over relatively nice functions with negligible singularities and then to understand the convergence behaviour of the positive currents defined by these nice functions as the functions approach to a more singular one. Generalizing the same scheme in the frame of $m$-positive supercurrents and $m$-subharmonic functions first of all we will try to understand the supercurrents of the form $(dd^{\#}u)^k\wedge\beta^{n-m}\wedge T$ over continuous $m$-subharmonic functions $u$ and then using the quasicontinuity with respect to relative capacity we will generalize the definition of these supercurrents to a broader class in Section 4.

Let $D$ be a smoothly bounded domain in $\mathbb{R}^n$ and $T$ be a closed, $m$-positive supercurrent of bidegree $(p,p)$ defined in a neighborhood of $\overline{D}$.
\begin{theorem}\label{defforcont}
Let $1\leq k\leq m$ and $u_1,\dots,u_k$ be continuous $m$-subharmonic functions in $D$ and $T$ be a closed, $m$-positive supercurrent of bidegree $(p,p)$, ($m-p\geq k$). Then one can inductively define a closed, $m$-positive supercurrent
\begin{equation}\label{dftn-cts}
dd^{\#}u_1\wedge\dots\wedge dd^{\#}u_k\wedge T\wedge\beta^{n-m}
\end{equation}
Moreover the Hessian of a continuous $m$-subharmonic function $u$ is defined as
\begin{equation}\label{hessian}
(dd^{\#}u)^m\wedge\beta^{n-m}
\end{equation}
and the sequence of natural approximants $u_{ij}\searrow u_i$ we have the following convergence of $m$-positive supercurrents
\begin{equation}\label{cnvrgnc-approx}
dd^{\#}u_{1j}\wedge\dots\wedge dd^{\#}u_{kj}\wedge T\wedge\beta^{n-m}\rightarrow dd^{\#}u_{1}\wedge\dots\wedge dd^{\#}u_{k}\wedge T\wedge\beta^{n-m}.
\end{equation}
\end{theorem}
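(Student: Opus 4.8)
The plan is to carry out the inductive Bedford--Taylor construction in the superspace formalism. Write $S_0=T\wedge\beta^{n-m}$, which is closed and $m$-positive by hypothesis, and suppose inductively that a partial product $S$ of the form $dd^{\#}u_{i_1}\wedge\dots\wedge dd^{\#}u_{i_r}\wedge T\wedge\beta^{n-m}$ has already been constructed as a closed $m$-positive supercurrent whose coefficients are measures. Since each $u_i$ is continuous, hence locally bounded, the supercurrent $u_i\,S$ is meaningful (a bounded Borel coefficient against a current of order zero), so for the next factor I would set
$$
dd^{\#}u_i\wedge S \coloneqq dd^{\#}\!\left(u_i\,S\right).
$$
Closedness is then automatic: $dd^{\#}(u_i S)=d\bigl(d^{\#}(u_i S)\bigr)$ is $d$-exact, hence $d$-closed because $d^2=0$. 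After $k$ steps this produces \eqref{dftn-cts}, and specializing to $T=1$, $p=0$, and $u_1=\dots=u_k=u$ with $k=m$ yields the Hessian \eqref{hessian}.

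For $m$-positivity I would argue by regularization. Put $u_i^h=u_i\ast\eta_h$; by Proposition \ref{propofmsub}(2) these are smooth and $m$-subharmonic, and they decrease to $u_i$ as $h\to 0$. For smooth arguments $dd^{\#}u_i^h$ is an $m$-positive $(1,1)$-form by Proposition \ref{propofmsub}(1), so the classical wedge $dd^{\#}u_1^h\wedge\dots\wedge dd^{\#}u_k^h\wedge T\wedge\beta^{n-m}$ is $m$-positive by Proposition \ref{blocki-conv} together with the definition of $m$-positive currents. Since a weak limit of $m$-positive supercurrents is again $m$-positive (the defining pairings pass to the limit), the positivity of \eqref{dftn-cts} follows once the convergence \eqref{cnvrgnc-approx} is established for the regularizations, which form a particular instance of natural approximants.

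The convergence \eqref{cnvrgnc-approx} is the heart of the matter, and here I would first record a Chern--Levine--Nirenberg type estimate: for $K\Subset D'\Subset D$ there is $C=C(K,D')$ with
$$
\bigl\| dd^{\#}v_1\wedge\dots\wedge dd^{\#}v_k\wedge T\wedge\beta^{n-m}\bigr\|_{K}\le C\prod_{i=1}^{k}\|v_i\|_{L^\infty(D')}
$$
for continuous $m$-subharmonic $v_i$, proved by the usual induction in which one integrates $dd^{\#}$ by parts onto a fixed cut-off and uses closedness of the remaining factors; this gives uniform local mass bounds on all partial products. To prove the convergence itself I would telescope, replacing the factors $dd^{\#}u_{ij}$ by $dd^{\#}u_i$ one at a time, so that it suffices to show: if $S_j\to S$ weakly with locally uniformly bounded mass and $v_j\searrow v$ are continuous $m$-subharmonic, then $dd^{\#}v_j\wedge S_j\to dd^{\#}v\wedge S$. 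As $dd^{\#}$ is weakly continuous, this reduces to $v_j S_j\to v S$, and I split
$$
v_j S_j - v S = v_j\,(S_j-S) + (v_j-v)\,S.
$$
The second term tends to $0$ by monotone convergence, since $v_j-v\searrow 0$ and $S$ has locally finite mass; for the first, Dini's theorem gives $v_j\to v$ uniformly on compacts (a decreasing sequence of continuous functions with continuous limit), so $v_j(S_j-S)$ agrees with $v(S_j-S)$ up to an error bounded by $\|v_j-v\|_{L^\infty(K)}$ times the uniformly controlled mass of $S_j-S$, while $v(S_j-S)\to 0$ from $S_j\to S$.

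The step I expect to cost the most is controlling the first term $v_j(S_j-S)$, where coefficient and current vary simultaneously with $j$; in the continuous regime this is precisely what Dini's theorem plus the Chern--Levine--Nirenberg mass bound tame, and it is the breakdown of uniform convergence for merely locally bounded $u_i$ that will force the quasicontinuity machinery of Section 4. Verifying the superspace Chern--Levine--Nirenberg inequality and checking that the limiting pairings respect the $m$-positivity test forms of Propositions \ref{blocki-conv} and \ref{propofmsub} should be the most delicate bookkeeping, though conceptually routine.
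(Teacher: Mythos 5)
Your proposal is correct and takes essentially the same route as the paper: the inductive Bedford--Taylor definition (pairing $u_k$ times the previously constructed current against $dd^{\#}$ of a test form, i.e.\ $dd^{\#}u_k\wedge S\coloneqq dd^{\#}(u_k S)$), positivity obtained by regularizing to smooth $m$-subharmonic functions and invoking Proposition \ref{blocki-conv}, and convergence deduced from the uniform (Dini) convergence of the natural approximants to the continuous limits. Your explicit Chern--Levine--Nirenberg mass bound and telescoping step spell out details that the paper compresses into the single remark that the convergence follows ``since the convergence is uniform,'' so your write-up is, if anything, the more complete of the two.
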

\begin{proof}
From the definition of $m$-positive supercurrents the statement (\ref{dftn-cts}) is trivially true for $k=1$. Now assume that it is true for
$$
dd^{\#}u_{1}\wedge\dots\wedge dd^{\#}u_{k-1}\wedge T\wedge\beta^{n-m}
$$ then for an $(m-k-p,m-k-p)$ smooth, compactly supported superform $\alpha$
$$
[dd^{\#}u_{1}\wedge\dots\wedge dd^{\#}u_{k}\wedge T\wedge\beta^{n-m}](\alpha)=\int_{D_{s}}u_kdd^{\#}u_{1}\wedge\dots\wedge dd^{\#}u_{k-1}\wedge T\wedge\beta^{n-m}\wedge dd^{\#}\alpha
$$
is an element of the dual of $(m-k-p,m-k-p)$ superforms hence it defines the supercurrent $[dd^{\#}u_{1}\wedge\dots\wedge dd^{\#}u_{k}\wedge T\wedge\beta^{n-m}]$.
Now fix an $m$-positive, smooth, compactly supported superform $\gamma$ and let $u_{ij}$ be the natural approximants (i.e the convolution of $u_i$ functions with an approximate identity) $u_{ij}\searrow u_i$. Since for all $i$, $u_i$'s are continuous the convergence is uniform and the convergence for $k-1$ gives
$$
[dd^{\#}u_{1}\wedge\dots\wedge dd^{\#}u_{k}\wedge T\wedge\beta^{n-m}](\gamma)=\int_{D_{s}}u_kdd^{\#}u_{1}\wedge\dots\wedge dd^{\#}u_{k-1}\wedge T\wedge\beta^{n-m}\wedge dd^{\#}\gamma
$$
$$
=\lim_{j\rightarrow\infty}\int_{D_{s}}u_{k}dd^{\#}u_{1j}\wedge\dots\wedge dd^{\#}u_{k-1,j}\wedge T\wedge\beta^{n-m}\wedge dd^{\#}\gamma
$$
$$
=\lim_{j\rightarrow\infty}\lim_{p\rightarrow\infty}\int_{D_{s}}u_{kp}dd^{\#}u_{1j}\wedge\dots\wedge dd^{\#}u_{k-1,j}\wedge T\wedge\beta^{n-m}\wedge dd^{\#}\gamma
$$
and by Proposition \ref{blocki-conv} we have
$$
\int_{D_{s}}u_{kp}dd^{\#}u_{1j}\wedge\dots\wedge dd^{\#}u_{k-1,j}\wedge T\wedge\beta^{n-m}\wedge dd^{\#}\gamma
$$
$$
=\int_{D_{s}}dd^{\#}u_{kp}dd^{\#}u_{1j}\wedge\dots\wedge dd^{\#}u_{k-1,j}\wedge T\wedge\beta^{n-m}\wedge\gamma\geq 0
$$
and then (\ref{cnvrgnc-approx}) follows since the convergence is uniform for $u_{ij}\searrow u_i$.
\end{proof}

In the classical study of Hessian equations in $\mathbb{C}^n$ one of the most important properties of $m$-subharmonic functions is their quasicontinuity i.e $m$-sunharmonic functions are in fact continuous outside of a set of arbitrarily small capacity. Now we are going to show that in our setting $m$-subharmonic functions share this nice property which actually gives us that for an $m$-subharmonic function $u$ and a tropical variety $V$ given in (\ref{TropVarofcodimp}); $u$ is continuous on $V$ except possibly on a subset of small relative capacity:

\begin{theorem}\label{quasicontinuity}
Let $D$ be a smoothly bounded domain in $\mathbb{R}^n$ and $V$ be a tropical variety of co-dimension $p$ defined in $D$ and given as in (\ref{TropVarofcodimp}). If $u$ is an $m$-subharmonic locally bounded function then for $\varepsilon>0$ there exists an open set $G\subset D$ such that $cap_{V,m}(G,D)<\varepsilon$ and $u$ is continuous on $D\setminus G$.
\end{theorem}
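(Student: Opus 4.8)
The plan is to follow the classical Bedford--Taylor scheme adapted to the relative capacity $cap_{V,m}$, reducing quasicontinuity to a single convergence estimate for the natural mollified approximants. First I would take the approximants $u_j=u\ast\eta_{1/j}$ of $u$; by Proposition~\ref{propofmsub} each $u_j$ is smooth, hence continuous, and $m$-subharmonic, the sequence decreases to $u$ with $u_j\ge u$ (since $u\in sh_m\subset sh_1$ is in particular subharmonic), and because $u$ is locally bounded the $u_j$ are uniformly bounded on relatively compact subsets. Since each $u_j$ is continuous while $u$ is upper semicontinuous, every difference $u_N-u$ is lower semicontinuous, so each set $\{x\in D:\,u_N(x)-u(x)>\delta\}$ is open; this is exactly the shape of the exceptional set we will build.

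The core reduction is a Chebyshev inequality at the level of the capacity. For a competitor $\phi\in sh_m(D)$ with $0\le\phi\le1$, the measure $(dd^{\#}\phi)^{m-p}\wedge\beta^{n-m}\wedge V$ is nonnegative by Proposition~\ref{blocki-conv} together with the $m$-positivity of $V$, and $u_j-u\ge0$; integrating the inequality $\delta\,\mathbf 1_{\{u_j-u>\delta\}}\le u_j-u$ against this measure yields
\[
cap_{V,m}(\{u_j-u>\delta\},D)\le\frac1\delta\,\sup_{0\le\phi\le1}\int_D (u_j-u)\,(dd^{\#}\phi)^{m-p}\wedge\beta^{n-m}\wedge V .
\]
Thus everything comes down to the \emph{key estimate} that the right-hand supremum tends to $0$ as $j\to\infty$, for each fixed $\delta$.

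To prove the key estimate I would integrate by parts, using that $V$, and hence $S:=\beta^{n-m}\wedge V$, is closed and $m$-positive (the case $m=p$ is immediate, since then the integrand is the fixed measure $S$ and the sets $\{u_j-u>\delta\}$ decrease to a null set, so assume $m>p$). Writing $g_j=u_j-u\ge0$ and moving one $dd^{\#}$ off $\phi$, the integral becomes $\int_D\phi\,dd^{\#}g_j\wedge(dd^{\#}\phi)^{m-p-1}\wedge S=\int_D\phi\,dd^{\#}u_j\wedge(\cdots)-\int_D\phi\,dd^{\#}u\wedge(\cdots)$. For a fixed $\phi$, convergence of the two mixed currents follows from the monotone convergence of $m$-positive supercurrents (Theorem~\ref{defforcont} applied to the continuous factors $\phi,u_j$, together with the elementary weak convergence $dd^{\#}u_j\wedge S\to dd^{\#}u\wedge S$ coming from $u_j\to u$ in $L^1$), and the nonnegativity of the original integrand forces the limit to vanish. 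The delicate point---and what I expect to be the \textbf{main obstacle}---is to make this vanishing \emph{uniform} in the competitor $\phi$. I would obtain the uniformity by an inductive integration by parts combined with a Cauchy--Schwarz step on the positive form $dg_j\wedge d^{\#}g_j\wedge(dd^{\#}\phi)^{m-p-1}\wedge S$, controlling every mass that appears by a Chern--Levine--Nirenberg-type bound depending only on $\|\phi\|_\infty\le1$, on $\sup|u_j|$ and on the mass of $V$, never on the particular $\phi$; the smallness is then transferred from the top-degree term, where $g_j\searrow0$ is integrated against a measure of uniformly bounded mass.

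Granting the key estimate, I assemble the exceptional set as follows. For each $k$ choose, using the key estimate, an index $N_k$ so large that $cap_{V,m}(\{u_{N_k}-u>1/k\},D)<\varepsilon/2^{k}$, and set $G_k=\{u_{N_k}-u>1/k\}$, which is open by the semicontinuity observed above. Let $G=\bigcup_{k\ge1}G_k$; by the countable subadditivity of $cap_{V,m}$ one gets $cap_{V,m}(G,D)\le\sum_k\varepsilon/2^{k}=\varepsilon$. On $D\setminus G$ we have, for every $k$ and every $j\ge N_k$, the bound $0\le u_j-u\le u_{N_k}-u\le 1/k$ (using $u_j\le u_{N_k}$), so the continuous functions $u_{N_k}$ converge to $u$ uniformly on $D\setminus G$; hence $u|_{D\setminus G}$ is continuous, which completes the proof.
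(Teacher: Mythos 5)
Your proposal is correct and follows essentially the same route as the paper's proof: decreasing smooth approximants, capacity-smallness of the open sets $\{u_{j_k}>u+1/k\}$, countable subadditivity of $cap_{V,m}$, and uniform convergence of continuous functions on the complement $D\setminus G$. The only difference is that the paper imports the key convergence-in-capacity statement (that $u_j\to u$ in $cap_{V,m}$ on compact subsets, its ($\dagger$)) verbatim from \cite{LuThesis} (Prop.~1.2.4) and \cite{ADFE} (Lemma~2), whereas you sketch a proof of it --- the Chebyshev reduction followed by the Bedford--Taylor style integration-by-parts/Cauchy--Schwarz/Chern--Levine--Nirenberg induction --- which is precisely the argument underlying those cited results.
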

\begin{proof}
First of all by following verbatim the proofs given in [Prop 1.2.4 in \cite{LuThesis}] and [Lemma 2 in \cite{ADFE}] we see that if $u_j$ is a sequence of smooth $m$-subharmonic functions converging to a locally bounded $m$-subharmonic function $u$ then $u_j$ converges to $u$ in $cap_{V,m}$ on each $E\subset\subset D$ ($\dagger$). Since $D$ is smoothly bounded we can assume that $u$ is bounded in a neighborhood of $\overline{D}$. For a natural approximation $u_j$ of $u$ we know $u_j$'s are $m$-subharmonic and converge uniformly on compacta to $u$ by Proposition \ref{propofmsub} so by ($\dagger$) for each $k\in \mathbb{N}$, there exists $j_k$ such that
$$
G_k=\{u_{j_k}>u+1/k\}
$$ and $cap_{V,m}(G_k,D)<2^{-k}$ for $n\in \mathbb{N}$ such that $2^{-k}<\varepsilon$. Put $G_n=\bigcup_{k\geq n}G_k$ and since $u_j\rightarrow u$ uniformly on $D\setminus G_n$, $u$ is continuous on $D\setminus G_n$ and $cap_{V,m}(G_n,D)\leq \sum_{k\geq n}cap_{V,m}(G_k,D)<2^{-k}<\varepsilon$.
\end{proof}

\section{Definition of m-positive supercurrents for locally bounded m-subharmonic functions}

In this section we will show that the definition of the $m$-positive supercurrents given for continuous $m$-subharmonic functions in Theorem \ref{defforcont} is also valid for locally bounded $m$-subharmonic functions and the main tool in here will be the quasicontinuity of locally bounded $m$-subharmonic functions:

\begin{theorem}
Let $D$ be a smoothly bounded domain in $\mathbb{R}^n$, $V$ be a tropical variety of co-dimension $p$ defined in $D$ which is given as in (\ref{TropVarofcodimp}) and for $1\leq k\leq m$, $u_1,\dots, u_k$ be locally bounded $m$-subharmonic functions ($m-p\geq k$). Then,
\begin{enumerate}
\item
$$
dd^{\#}u_1\wedge\dots\wedge dd^{\#}u_k\wedge V\wedge\beta^{n-m}[\gamma]
$$
$$
=\int_{D_s}u_kdd^{\#}u_1\wedge\dots\wedge dd^{\#}u_{k-1}\wedge dd^{\#}\gamma\wedge V\wedge\beta^{n-m}
$$
defines an $m$-positive current of bidegree $(n-m+p+k,n-m+p+k)$ where $\gamma$ is an $m$-positive $(m-p-k, m-p-k)$ superform.

\item For the natural approximants $u_{i_j}\rightarrow u_i$, $i=1,\dots, k$ one has
$$
dd^{\#}u_{1_j}\wedge\dots\wedge dd^{\#}u_{k_j}\wedge V\wedge\beta^{n-m}\rightarrow dd^{\#}u_1\wedge\dots\wedge dd^{\#}u_k\wedge V\wedge\beta^{n-m}
$$

\end{enumerate}

\end{theorem}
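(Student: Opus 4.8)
The plan is to promote the continuous-case construction of Theorem \ref{defforcont} to locally bounded $m$-subharmonic functions by exploiting quasicontinuity (Theorem \ref{quasicontinuity}) exactly as the classical Bedford--Taylor theory does. First I would proceed by induction on $k$, the base case $k=1$ being immediate from the definition of $m$-subharmonicity. Assuming the current $dd^{\#}u_1\wedge\dots\wedge dd^{\#}u_{k-1}\wedge V\wedge\beta^{n-m}$ is already well-defined as a closed $m$-positive supercurrent, I would try to make sense of the expression in part (1). The subtlety is that $u_k$ is only locally bounded, not continuous, so the pairing $\int_{D_s}u_k\,dd^{\#}u_1\wedge\dots\wedge dd^{\#}u_{k-1}\wedge dd^{\#}\gamma\wedge V\wedge\beta^{n-m}$ needs $u_k$ to be integrable against the (locally finite, positive) measure coefficients of the inductively defined current. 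Since $u_k$ is bounded and Borel measurable and the inductive current is $m$-positive (hence has measure coefficients of locally finite total variation when paired against $m$-positive test forms), this integral is finite, so the functional $\gamma\mapsto dd^{\#}u_1\wedge\dots\wedge dd^{\#}u_k\wedge V\wedge\beta^{n-m}[\gamma]$ is well-defined on $m$-positive $(m-p-k,m-p-k)$ superforms and, by Proposition \ref{blocki-conv} applied to the regularizations, nonnegative; this yields $m$-positivity of the resulting current.

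The heart of the argument is the convergence statement (2), and the key tool is quasicontinuity. I would fix the natural approximants $u_{i_j}\searrow u_i$ and test convergence against a smooth $m$-positive compactly supported superform $\gamma$. Writing the difference of the two pairings and using the induction hypothesis to replace the lower-order currents, one reduces (via a telescoping of $dd^{\#}u_{1_j}\wedge\dots-dd^{\#}u_1\wedge\dots$ into $k$ successive one-factor replacements) to controlling integrals of the form $\int_{D_s}(u_{k_j}-u_k)\,S_j\wedge dd^{\#}\gamma\wedge V\wedge\beta^{n-m}$, where $S_j$ is a product of $dd^{\#}$ of approximants and fixed factors. The standard scheme is: given $\varepsilon>0$, use Theorem \ref{quasicontinuity} to find an open $G\subset D$ with $cap_{V,m}(G,D)<\varepsilon$ and $u$ continuous on $D\setminus G$; on $D\setminus G$ the approximants converge uniformly, handling that part of the integral, while on $G$ one controls the mass of the $m$-positive measures $S_j\wedge V\wedge\beta^{n-m}$ by the capacity, using the uniform local boundedness of the $u_{i_j}$ (a Chern--Levine--Nirenberg-type estimate in this setting) to get a uniform bound on total mass.

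The main obstacle, and the step requiring the most care, is establishing the uniform mass bound for the intermediate $m$-positive currents $S_j$ together with the capacity estimate that lets one discard the contribution over $G$. Concretely, one needs an inequality of the type $\int_G S_j\wedge V\wedge\beta^{n-m}\leq C\cdot cap_{V,m}(G,D)$ with $C$ independent of $j$, which is the analogue of the Chern--Levine--Nirenberg inequality relative to the tropical variety $V$; this is precisely where the definition (\ref{capacity}) of $cap_{V,m}$ as a supremum over $\phi\in sh_m(D)$ with $0\leq\phi\leq1$ is exploited, after normalizing the bounded functions $u_i$ to take values in $[0,1]$. Once this uniform estimate is in hand, the argument closes by letting $\varepsilon\to 0$, and I would remark that the limit current is independent of the chosen approximating sequence by the same quasicontinuity device, so that $dd^{\#}u_1\wedge\dots\wedge dd^{\#}u_k\wedge V\wedge\beta^{n-m}$ is unambiguously defined.
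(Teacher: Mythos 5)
Your proposal is correct and follows essentially the same route as the paper: induction on $k$ with the base case from the definition of $m$-positivity, positivity of the pairing via approximants and Proposition \ref{blocki-conv}, and convergence via Theorem \ref{quasicontinuity} by splitting the integrals over a small-capacity open set $G$ and its complement, where the approximants converge uniformly. The only notable difference is that you make explicit the uniform Chern--Levine--Nirenberg-type mass bound $\int_G S_j\wedge V\wedge\beta^{n-m}\leq C\,cap_{V,m}(G,D)$, which the paper's proof invokes only implicitly when it declares the terms over $supp\,\gamma\cap G$ ``negligible.''
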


\begin{proof}
First of all for $k=1$ we can obtain the result trivially from the definition of $m$-positive currents. Now if we take the natural approximants $u_{1_j},\dots,u_{k-1_{j}}$ and $u_{k_t}$ then by Theorem \ref{defforcont} we have
$$
\int_{D_s}u_{k_t}dd^{\#}u_{1_j}\wedge\dots\wedge dd^{\#}u_{k-1_{j}}\wedge V\wedge\beta^{n-m}\wedge dd^{\#}\gamma
$$
$$
=\int_{D_s}dd^{\#}u_{k_t}\wedge dd^{\#}u_{1j}\wedge\dots\wedge dd^{\#}u_{k-1_{j}}\wedge V\wedge\beta^{n-m}\wedge\gamma\geq0
$$ for an $m$-positive $(m-k-p,m-k-p)$ superform $\gamma$ and this gives us as $j\rightarrow\infty$
$$
\int_{D_s}u_{k_t}dd^{\#}u_{1}\wedge\dots\wedge dd^{\#}u_{k-1}\wedge V\wedge\beta^{n-m}\wedge dd^{\#}\gamma\geq0
$$
Hence as $t\rightarrow\infty$ we obtain
$$
\int_{D_s}u_{k}dd^{\#}u_{1}\wedge\dots\wedge dd^{\#}u_{k-1}\wedge V\wedge\beta^{n-m}\wedge dd^{\#}\gamma\geq0
$$
and the supercurrent $[dd^{\#}u_1\wedge\dots\wedge dd^{\#}u_k\wedge V\wedge\beta^{n-m}[\gamma]]$ is $m$-positive.

In part $(2)$ we will use the quasicontinuity of $m$-subharmonic functions given in Theorem \ref{quasicontinuity}. Suppose that the assertion given in $(2)$ is true for $k-1$ then by the quasicontinuity we can find an open set $G\subset D$ such that $cap_{V,m}(G,D)<\varepsilon$ and $u_1\in \mathcal{C}(D\setminus G)$. Take a continuous function $\widehat{u_1}\in \mathcal{C}(D)$ such that $\widehat{u_1}=u_1$ on $D\setminus G$ then, for $k=1,\dots,m$
\begin{equation*}
\begin{split}
\Bigg|\int_{D_s}u_{1_j}dd^{\#}u_{2_j}\wedge\dots\wedge dd^{\#}u_{k_{j}}\wedge V\wedge\beta^{n-m}\wedge\gamma
\\
&-\int_{D_s}u_{1}dd^{\#}u_{2}\wedge\dots\wedge dd^{\#}u_{k}\wedge V\wedge\beta^{n-m}\wedge\gamma\Bigg|
\end{split}
\end{equation*}

$$
\leq\left|\int_{supp\gamma\setminus G}(u_{1_j}-u_1)dd^{\#}u_{2_j}\wedge\dots\wedge dd^{\#}u_{k_{j}}\wedge V\wedge\beta^{n-m}\wedge\gamma\right|
$$
$$
+\left|\int_{D_s}\widehat{u_1}[dd^{\#}u_{2_j}\wedge\dots\wedge dd^{\#}u_{k_{j}}-dd^{\#}u_{2}\wedge\dots\wedge dd^{\#}u_{k}]\wedge V\wedge\beta^{n-m}\wedge\gamma\right|
$$

$$
+\left|\int_{supp\gamma\cap G}(u_{1_j}-u_1)dd^{\#}u_{2_j}\wedge\dots\wedge dd^{\#}u_{k_{j}}\wedge V\wedge\beta^{n-m}\wedge\gamma\right|
$$

$$
+\left|\int_{supp\gamma\cap G}(u_1-\widehat{u_1})[dd^{\#}u_{2_j}\wedge\dots\wedge dd^{\#}u_{k_{j}}-dd^{\#}u_{2}\wedge\dots\wedge dd^{\#}u_{k}]\wedge V\wedge\beta^{n-m}\wedge\gamma\right|
$$
since the first two pieces converge to $0$ because of the uniform convergence of $u_{1_j}\rightarrow u_1$ on $supp\gamma\setminus G$ and the continuity of $\widehat{u_1}$ and the last two pieces are negligible since $cap_{V,m}(G,D)<\varepsilon$. Hence we have the convergence of
$$
u_{1_j}dd^{\#}u_{2_j}\wedge\dots\wedge dd^{\#}u_{k_{j}}\wedge V\wedge\beta^{n-m}\rightarrow u_{1}dd^{\#}u_{2}\wedge\dots\wedge dd^{\#}u_{k}\wedge V\wedge\beta^{n-m}
$$
and the result follows.

\end{proof}

\section{Indicators and Newton Numbers of $m$-subharmonic Functions}

In this section we will first introduce the indicator and the Newton number of an $m$-subharmonic function and then we will give a geometric description of Newton numbers through certain polyhedra. Let us first give the motivation for this consideration by mentioning the corresponding results from complex setting of plurisubharmonic functions and the superspace setting of convex functions:

In \cite{Ras01} Rashkovskii introduced the indicator of a Lelong-class plurisubharmonic function $u\in \mathcal{L}$ as

$$
\Psi_{u,x}(y)=\lim_{R\rightarrow\infty} R^{-1}\sup\{u(z):~~|z_k-x_k|\leq |y_k|^R,~~1\leq k\leq n\}
$$

and showed that (Theorem 3.4, \cite{Ras01}) for $\psi_{u,x}(t)=\Psi_{u,x}(e^{t_1},\dots,e^{t_n})$, $t\in\mathbb{R}$ and $\Theta_{u,x}=\{a\in\mathbb{R}^n:~~\langle a,t\rangle\leq \psi^+_{u,x}(t)~~\forall t\in \mathbb{R}^n\}$,
\begin{equation}\label{resultrashkovskii}
M(\Psi_{u,x};\mathbb{C}^n)=n!Vol(\Theta_{u,x})
\end{equation}
where $M(\Psi_{u,x};\mathbb{C}^n)$ is the total Monge-Amp\`{e}re mass of the indicator. Moreover, if one takes $u=\log|P|$ for a polynomial $P$ then the set $\Theta_{u,x}$ is the Newton polyhedron of $P$ at $\infty$ i.e $\Theta_{u,x}=conv(\omega_0\cup\{0\})$ where
$$
\omega_0=\left\{s\in \mathbb{Z}^{n}_{+}:~~\sum_{j}\left|\frac{\partial^sP_j}{\partial z^s}(0)\right|\neq0\right\}
$$ and the right hand side of (\ref{resultrashkovskii}) is the Newton number of $P$ at $\infty$.

Later in \cite{Lagerberg12} Lagerberg gave an analogue of this argument for the convex functions in the superspace setting. For a convex Lelong class function [for details of this real setting analogue of Lelong-class functions see \cite{Lagerberg12}] $f\in \mathcal{L}$ the associated function is given by
$$
\tilde{f}(x)=\lim_{t\rightarrow\infty}\frac{f(tx)}{t}
$$
and then he showed that if $f_1,\dots, f_n$ are tropical polynomials over $A_1,\dots, A_n$ finite sets of points in $\mathbb{Z}^n$ then
$$
\int_{\mathbb{R}^n\times\mathbb{R}^n}dd^{\#}f_1\wedge\dots\wedge dd^{\#}f_n=n!Vol(Newt(f_1),\dots, Newt(f_n))
$$
where $Newt(f_i)$ is the Newton polytope associated to $f_i$.

\begin{defn}
Let $u$ be a locally bounded $m$-subharmonic function with an isolated singularity at $x\in \mathbb{R}^n$. Then its residual mass at $x$ is given by
$$
\tau(u,x)=(dd^{\#}u)^m\wedge\beta^{n-m}\downharpoonleft_{\{x\}\times\mathbb{R}^n}.
$$
\end{defn}

Similar to complex/convex cases let us define the Lelong-class of $m$-subharmonic functions as
$$
\mathcal{L}_{m}=\{f:\mathbb{R}^n\rightarrow\mathbb{R}^n:~~f(x)\leq C|x|+D,~~f~m-subharmonic,~C\geq0, ~D\in \mathbb{R}\}
$$
and the indicator of $f\in \mathcal{L}_m$ at $x\in \mathbb{R}^n$ as
$$
\Psi_{f,x}(y)=\lim_{R\rightarrow\infty} R^{-1}\sup\{f(t):~~|t_k-x_k|\leq |y_k|^R,~~1\leq k\leq n\}
$$

\begin{rem}
Following the same lines of complex/convex cases we see the following facts about the indicator:
\begin{itemize}
\item[i)] $\Psi_{f,x}\in \mathcal{L}_m$.
\item[ii)] $f(t)\leq \Psi_{f,x}(t-x)+C_x,~~\forall t\in \mathbb{R}^n$.
\end{itemize}
\end{rem}

Now we will generalize the idea of Newton numbers to arbitrary $m$-subharmonic functions:
\begin{defn}
Let $u$ be a locally bounded $m$-subharmonic function with isolated singularity at $x\in \mathbb{R}^n$. Then the Newton number of $u$ at $x$ is defined as
$$
N(u,x)=\tau(\Psi_{u,x};0)
$$
\end{defn}

Now we will give the geometric description of the Newton number of an $m$-subharmonic function:

\begin{theorem}
For an $m$-subharmonic function $u\in \mathcal{L}_m$,
$$
N(u,x)=\tau(\Psi_{u,x};0)=\left(\binom{n}{m}\right)^{-1}\mathcal{H}^{n-m}(\Theta_{u,x})
$$
where $\mathcal{H}^{n-m}$ denotes the $(n-m)$-dimensional Hausdorff measure and
$$
\Theta_{u,x}=\{a\in \mathbb{R}^n:~~\langle a,t\rangle\leq \Psi^{+}_{u,x}(t),~~\forall t\in \mathbb{R}^n\}.
$$
\end{theorem}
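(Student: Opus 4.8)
The plan is to compute the residual mass $\tau(\Psi_{u,x};0)$ directly from the structure of the indicator $\Psi_{u,x}$, exploiting that indicators are homogeneous (degree one) $m$-subharmonic functions, and to identify this mass with the Hausdorff measure of the set $\Theta_{u,x}$ via the Legendre-type duality already implicit in the definition of $\Theta_{u,x}$. The key observation is that $\Psi_{u,x}$ belongs to $\mathcal{L}_m$ and is positively homogeneous, so after the logarithmic change of variables $t\mapsto(t_1,\dots,t_n)$ with $y_k=e^{t_k}$ (mirroring Rashkovskii's passage from $\Psi$ to $\psi$ in (\ref{resultrashkovskii})) the function $\Psi_{u,x}$ becomes a convex, piecewise-linear object whose associated Newton-type body is precisely $\Theta_{u,x}$. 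This reduces the problem to a statement purely about convex geometry, where the residual mass of the Monge--Amp\`ere-type operator $(dd^\#\cdot)^m\wedge\beta^{n-m}$ is read off from the geometry of the support function.

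First I would establish that the indicator $\Psi_{u,x}$ is the support function of $\Theta_{u,x}$, i.e. that $\Psi^+_{u,x}(t)=\sup_{a\in\Theta_{u,x}}\langle a,t\rangle$; this is a direct consequence of the definition of $\Theta_{u,x}$ as a polar/dual body together with the fact that $\Psi_{u,x}$ is convex and positively homogeneous of degree one, so the bidual recovers the function. Second, I would invoke the link between tropical polynomials and supercurrents from Section 2, specifically the identification in (\ref{TropVarofcodimp}) of $dd^\#f_1\wedge\dots\wedge dd^\#f_p$ with the intersection of tropical hypersurfaces, together with Lagerberg's mixed-volume formula quoted in the introduction to Section 5, namely that for tropical polynomials $\int dd^\#f_1\wedge\dots\wedge dd^\#f_n=n!\,\mathrm{Vol}(\mathrm{Newt}(f_1),\dots,\mathrm{Newt}(f_n))$. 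The residual mass $\tau(\Psi_{u,x};0)$ is the mass of $(dd^\#\Psi_{u,x})^m\wedge\beta^{n-m}$ concentrated over $\{0\}\times\mathbb{R}^n$, and the $\beta^{n-m}$ factor carries out the remaining integration in the $n-m$ transverse directions. The combinatorial constant $\left(\binom{n}{m}\right)^{-1}$ should arise exactly from this partial integration against $\beta^{n-m}$: the wedge $\beta^{n-m}$ selects, up to the normalization constant $C_n$ fixed in Section 1, a sum of $\binom{n}{m}$ coordinate subspace contributions, and averaging over these produces the reciprocal binomial factor in front of the $(n-m)$-dimensional Hausdorff measure.

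The main obstacle I anticipate is the third step: carefully relating the mass of $(dd^\#\Psi_{u,x})^m\wedge\beta^{n-m}$ over the fiber $\{0\}\times\mathbb{R}^n$ to the $(n-m)$-dimensional Hausdorff measure $\mathcal{H}^{n-m}(\Theta_{u,x})$ rather than to the full $n$-dimensional volume that appears in both Rashkovskii's and Lagerberg's formulas. The point is that wedging $m$ copies of $dd^\#$ (rather than $n$) leaves a current of codimension $m$, so the resulting mass is naturally an $(n-m)$-dimensional measure of the dual body $\Theta_{u,x}$, and one must verify that the boundary structure of $\Theta_{u,x}$ on which $\Psi_{u,x}$ fails to be smooth contributes exactly the $(n-m)$-dimensional faces with the correct multiplicities. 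I would handle this by first proving the identity for indicators of tropical polynomials (where $\Theta_{u,x}$ is a rational polytope and $\mathcal{H}^{n-m}$ is computed face by face using Lagerberg's formula), and then passing to general $u\in\mathcal{L}_m$ by the approximation and convergence theorems of Sections 3 and 4: a locally bounded $m$-subharmonic function is approximated by its natural mollifications $u_j\searrow u$, whose indicators converge so that the associated supercurrents $(dd^\#\Psi_{u_j,x})^m\wedge\beta^{n-m}$ converge to $(dd^\#\Psi_{u,x})^m\wedge\beta^{n-m}$ by the convergence statement (\ref{cnvrgnc-approx}), while $\Theta_{u_j,x}\to\Theta_{u,x}$ in the Hausdorff metric gives convergence of the $(n-m)$-dimensional Hausdorff measures, yielding the general identity in the limit.
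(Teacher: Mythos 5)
Your route is genuinely different from the paper's, but it has two gaps that I do not think can be repaired as written. The paper's proof is short and rests on two imported results: first, the generalized comparison principle for positive supercurrents (Theorem 6 of \cite{EZ18}) shows that $\Psi_{u,x}$ and $\Psi^{+}_{u,x}$ have the \emph{same} residual mass at the origin; second, the Colesanti--Hug Steiner-type formula \cite{ColHug1,ColHug2} for Hessian measures, $F_m[u](\Omega)=\int_\Omega (dd^{\#}u)^m\wedge\beta^{n-m}=\binom{n}{m}^{-1}\mathcal{H}^{n-m}((\partial u,\Omega))$, is applied to $\Psi^{+}_{u,x}$ with $\Omega=\{0\}\times\mathbb{R}^n$, where the union of subdifferentials $(\partial \Psi^{+}_{u,x},\{0\})$ is exactly $\Theta_{u,x}$; this yields the binomial constant and the $(n-m)$-dimensional Hausdorff measure in one stroke. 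Your first step instead asserts that $\Psi_{u,x}$ is convex and deduces the support-function identity $\Psi^{+}_{u,x}(t)=\sup_{a\in\Theta_{u,x}}\langle a,t\rangle$ by biduality. That convexity claim is false in general: for $m<n$, $m$-subharmonic functions are strictly more general than convex ones (Proposition \ref{propofmsub}(5): convexity is the class $sh_n$), and the Remark in Section 5 only gives $\Psi_{u,x}\in\mathcal{L}_m$. The discrepancy between the indicator and its convex-type envelope $\Psi^{+}_{u,x}$ is exactly what the comparison principle of \cite{EZ18} is invoked to neutralize at the level of residual masses; your proposal has no ingredient playing this role.

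The second gap is in your limiting step. The convergence statement (\ref{cnvrgnc-approx}) and its Section 4 extension give weak convergence of the currents $dd^{\#}u_{1j}\wedge\dots\wedge dd^{\#}u_{kj}\wedge T\wedge\beta^{n-m}$, but the residual mass $\tau(\cdot;0)$ is the mass carried by the single fiber $\{0\}\times\mathbb{R}^n$, and point masses are not continuous under weak convergence of currents (one gets only semicontinuity); likewise $\mathcal{H}^{n-m}(\Theta_{u_j,x})\to\mathcal{H}^{n-m}(\Theta_{u,x})$ does not follow from Hausdorff-metric convergence of the sets, since lower-dimensional Hausdorff measure is not continuous under that convergence. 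More structurally, your base case consists of tropical polynomials, which are convex, and their mollifications or decreasing limits remain convex; so no approximation scheme built on them can reach the non-convex $m$-subharmonic indicators that form the general case, and the natural mollifications $u_j$ of a general $u\in\mathcal{L}_m$ are not tropical polynomials, so the base case does not even cover your approximants. The paper's use of the Steiner formula sidesteps all of this because that formula holds directly for the relevant class of functions, with no polyhedral approximation and no limit interchange at the singular fiber.
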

\begin{proof}
First of all as a result of the generalized comparison principle for positive supercurrents (Theorem 6, \cite{EZ18}) we have the following equality of the residual mass of $\Psi_{u,x}$ and $\Psi^{+}_{u,x}$:
$$
\tau(\Psi_{u,x};0)=(dd^{\#}\Psi_{u,x})^m\wedge\beta^{n-m}\downharpoonleft_{\{0\}\times\mathbb{R}^n}=(dd^{\#}\Psi^{+}_{u,x})^m\wedge\beta^{n-m}\downharpoonleft_{\{0\}\times\mathbb{R}^n}.
$$
Now note that the Steiner formula for $m$-subharmonic functions [See \cite{ColHug1,ColHug2} for more detailed treatment of the formula] gives that for an $m$-subharmonic function $u$ and a set $\Omega$ we have
$$
F_m[u](\Omega)=\int_{\Omega}(dd^{\#}u)^m\wedge\beta^{n-m}=\left(\binom{n}{m}\right)^{-1}\mathcal{H}^{n-m}((\partial u, \Omega))
$$
where
$$
(\partial u, \Omega)=\bigcup_{t_0\in \Omega}\{a\in \mathbb{R}^n: ~u(t)\geq u(t_0)+\langle a,t-t_0\rangle~~\forall t\in \mathbb{R}^n\}
$$
and in our case this equality turns into
$$
F_m[\Psi^{+}_{u,x}](\{0\}\times\mathbb{R}^n)=\left(\binom{n}{m}\right)^{-1}\mathcal{H}^{n-m}(\Theta_{u,x}).
$$
Hence the Newton number of a given $m$-subharmonic function $u\in \mathcal{L}_m$ is given as
$$
N(u,x)=\left(\binom{n}{m}\right)^{-1}\mathcal{H}^{n-m}(\Theta_{u,x}).
$$
\end{proof}

\end{document}